\def\argmin{\mathop\mathrm{argmin}}
\newcommand{\norm}[1]{\left\lVert#1\right\rVert}
\newcommand{\rmD}{\mathrm{D}}
\newcommand{\rmd}{\mathrm{d}}
\theoremstyle{definition}
\newtheorem{theorem}{Theorem}[section]
\makeatletter\@addtoreset{equation}{section}\makeatother
\newcommand{\revised}[1]{#1}
\begin{document}

\title{Enabling equation-free modeling via diffusion maps}

\author{%
Tracy Chin\thanks{Department of Mathematics, University of Washington, Seattle, USA} \and
Jacob Ruth\thanks{Unaffiliated} \and
Clayton Sanford\thanks{Department of Computer Science, Columbia University, New York City, USA} \and
Rebecca Santorella\thanks{Division of Applied Mathematics, Brown University, Providence, USA} \and
Paul Carter\thanks{Department of Mathematics, University of California, Irvine, USA} \and
Bj\"orn Sandstede\footnotemark[4]}

\date{\today}
\maketitle

\begin{abstract}
Equation-free modeling aims at extracting low-dimensional macroscopic dynamics from complex high-dimensional systems that govern the evolution of microscopic states. This algorithm relies on lifting and restriction operators that map macroscopic states to microscopic states and vice versa. Combined with simulations of the microscopic state, this algorithm can be used to apply Newton solvers to the implicitly defined low-dimensional macroscopic system or solve it more efficiently using direct numerical simulations. The key challenge is the construction of the lifting and restrictions operators that usually require a priori insight into the underlying application. In this paper, we design an application-independent algorithm that uses diffusion maps to construct these operators from simulation data. Code is available at \url{https://doi.org/10.5281/zenodo.5793299}.
\end{abstract}


\section{Introduction}
\label{introduction}

In many complex dynamical systems, low-dimensional macroscopic behavior emerges from interactions at the high-dimensional microscopic level. For instance, traffic jams are global macroscopic structures that emerge from the interactions of many individual cars that move along a road: traffic jams can be captured meaningfully by a single macroscopic quantity, namely the standard deviation of the distances between consecutive cars from the mean (see below for further details). Mathematically, these macroscopic structures live on low-dimensional invariant manifolds, and our goal is to exploit their existence, and the accompanying reduction in effective dimension, when we conduct bifurcation analyses or carry out direct simulations. \revised{In certain cases, we may be able to characterize these invariant manifolds explicitly, either because they are given as graphs of explicit functions or because the microscopic variables decouple from the macroscopic system. In most cases, however, these invariant manifolds are not known. We are interested in the latter case,  particularly where the microscopic variables are interchangeable when considering macroscopic effects and structures.}

Equation-free modeling, so named because the macroscopic system is not governed by an explicit ordinary differential equation, estimates macroscopic behavior through a multi-scale approach that exploits the connection between the macro and microlevels \cite{Kevrekidis2004, Kevrekidis2009, Kevrekidis2003}. All equation-free methods depend on the following algorithm that attempts to describe the macroscopic system implicitly \cite{Kevrekidis2009}:
\begin{compactenum}[(1)]
\item lift: build the microstate from the macrostate using the lifting operator $\mathcal{L}$;
\item evolve: simulate the microstate for short bursts using the evolution $\Phi_t$; and 
\item restrict: calculate the macrostate from the evolved microstate using the restriction operator $\mathcal{R}$.
\end{compactenum}
One of the biggest challenges in equation-free modeling is the selection of lifting and restriction operators since the choice of macroscopic observables may not always be obvious \cite{MarschlerPRE}. One way to pick relevant macroscopic variables is to use a dimension-reduction technique such as diffusion maps. Diffusion maps embed high dimensional data sets into low-dimensional Euclidean spaces. Unlike standard linear methods such as principal component analysis, diffusion maps are able to find useful low-dimensional parametrizations of the original data sets even when the data lie on or near a nonlinear manifold \cite{Coifman2006, Coifman2005, lafon2004}. In this paper, we show that low-dimensional embeddings given through diffusion maps can be used to identify macroscopic variables in equation-free modeling and define lifting and restriction operators. \revised{We note that this approach will not necessarily provide a physical interpretation of the resulting parametrization, though it will often result in macroscopic variables that are physically relevant: We refer to \S\ref{s:embedding} below, \cite[Inset in Figure~5]{Frewen}, and \cite[\S~IV.A and Figure~5]{Sonday2009} for examples, and to \cite{Meila} for an algorithm that interprets macroscopic variables in terms of a prescribed list of relevant physical quantities.}

\begin{figure}
\centering
\includegraphics{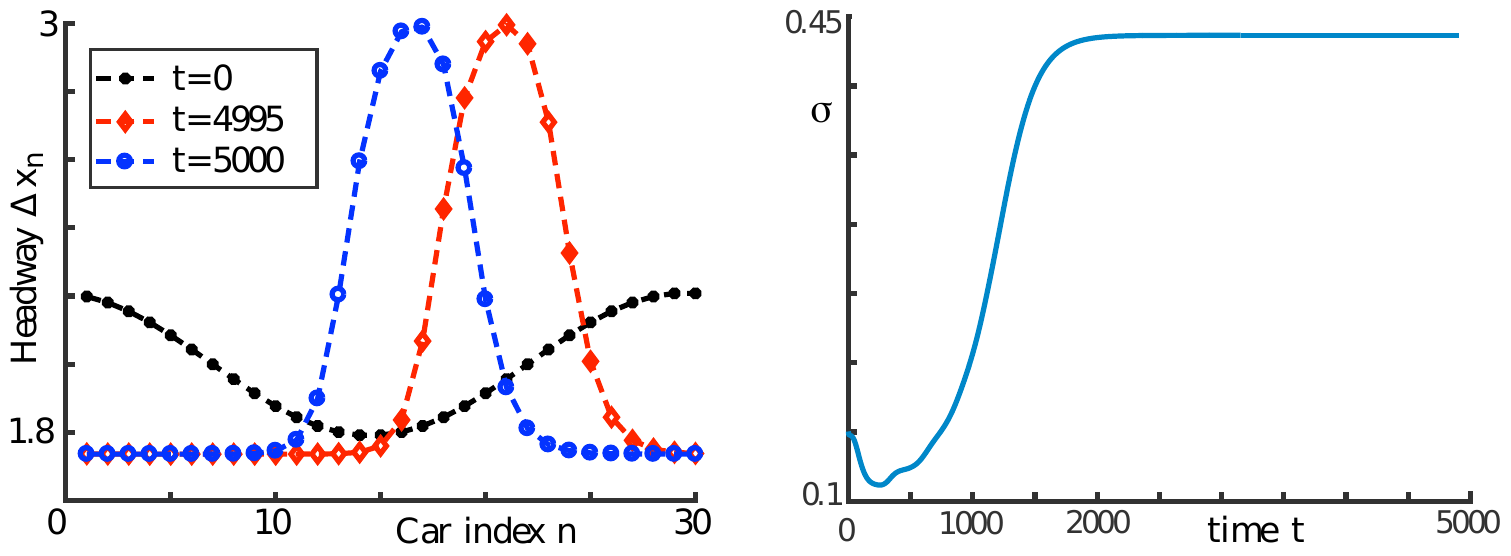}
\caption{Shown are the time evolution of the headway profile (left) and the standard deviation from the mean headway (right) for a solution of (\ref{trafficEq}) with $v_0=1$ (all other parameters are as in Table~\ref{tab:param}). In the left panel, the initial state (black) is compared with the final traveling-wave solution (red to blue): note that the traffic jam corresponds to the region where the headways are small, so that the car density is high. The right panel shows the time evolution of the deviation $\sigma$ of the headways from the mean headway: the increase and eventual convergence of $\sigma$ to a larger value indicates the emergence of a stable traffic jam.}\label{trafficSim}
\end{figure}

To illustrate our algorithm and demonstrate its effectiveness, we apply it to the same traffic model \cite{bando1995dynamical} to which traditional equation-free modeling had been applied in earlier work \cite{MarschlerSIAM}. In this model, $N$ cars drive around a ring road of length $L$. We assume that all drivers follow the same deterministic behavior governed by
\begin{equation}
    \tau \frac{\rmd^2x_n}{\rmd t^2} + \frac{\rmd x_n}{\rmd t}  = V(x_{n+1} - x_n), \quad x_n\in \mathbb{R}/L\mathbb{Z}, \quad n = 1, 2, ... N,
    \label{trafficEq}
\end{equation}
where $x_n$ is the position of the $n$th car, $\tau$ reflects the inertia of cars (or, alternatively, reaction times of drivers), and $V$ is the optimal velocity function defined by
\[
V(d) = v_0(\tanh(d-h) + \tanh(h)),
\]
where $v_0(1+\tanh(h))$ is the maximal velocity and $h$ determines the desired safety distance between cars. In this model, each driver adjusts their acceleration to attain an optimal velocity based on the distance $\Delta x_n = x_{n+1} - x_n$ to the car in front, which is also referred to as the headway. Two common traffic patterns can emerge as stable solutions in this model, namely free-flow solutions, where cars are evenly spaced and move with the same speed, and traveling-wave solutions, which correspond to traffic jams \cite{MarschlerSIAM}. To illustrate traffic jam solutions, it is convenient to monitor the \textit{headways} of cars: as indicated in Figure~\ref{trafficSim}, localized traveling-wave profiles correspond to traffic jams.

In \cite{MarschlerSIAM}, equation-free methods were used to trace out the bifurcation diagram for the existence and stability of traffic jam solutions as the parameter $v_0$ varies. For this analysis, the standard deviation $\sigma$ of the headways, defined by 
\[
\sigma = \sqrt{\frac{1}{N-1} \sum_{n=1}^N \left( \Delta x_n - \langle \Delta x_n \rangle \right)^2 },
\]
was used as the macroscopic variable, where $\langle \Delta x_n \rangle  = \frac{L}{N}$ is the average headway. Low standard deviations correspond to free-flow solutions, while large values of the standard deviations correspond to traffic jams (see Figure~\ref{trafficSim}). With $\sigma$ as the macroscopic variable, the lifting and restriction operators are easy to define in terms of $\sigma$: as illustrated in Figure~\ref{trafficEqFree}, lifting is accomplished by changing the locations of cars to match a given value of $\sigma$, and restricting is achieved by calculating $\sigma$ for a given profile. \revised{Note that many different car arrangements will lead to the same value of $\sigma$, and the lifting operator is therefore not uniquely defined. Choosing $\sigma$ as the macroscopic variable requires knowledge of the underlying structure of the traffic system, and our goal is to provide algorithms that identify macroscopic variables automatically from the data set.}

\begin{figure}
\centering
\includegraphics{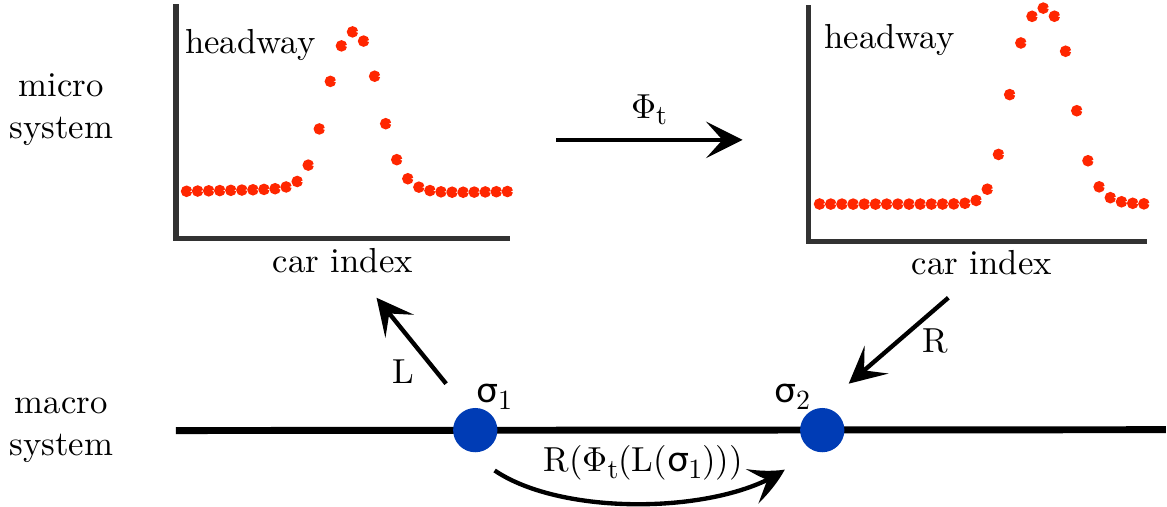}
\caption{Overview of the time stepping scheme in the context of the traffic model \cite{MarschlerSIAM}. The macroscopic state $\sigma_1$ is lifted to the microscopic profile $\mathcal{L}( \sigma_1 )$. The microsystem is then evolved for time $t$ to the next microstate $\Phi_t(\mathcal{L}(\sigma_1))$. Finally, this profile is restricted to find the evolution of the macroscopic state $\sigma_2 = \mathcal{R}(\Phi_t(\mathcal{L}(\sigma_1)))$.} \label{trafficEqFree}
\end{figure}

We will see that the proposed approach via diffusion maps will, when applied to the same traffic model, generate a parametrization that recovers the standard deviation as one of the macroscopic variables; in addition, it identifies  the location of the traveling wave as a second dimension in the parametrization. Using this new embedding as our macroscopic variables, we apply equation-free methods to reproduce the bifurcation diagram in \cite{MarschlerSIAM}, but we do so with restriction and lifting operators that emerge automatically from our diffusion map analysis without using prior knowledge of the system. In particular, we use the Nystr\"om extension for our restriction operator, which gives estimates for new components of an eigenvector of a matrix constructed from data \cite{Liu, MarschlerSIAM, Sonday2009}. We define a new lifting operator that creates microstates from linear combinations of existing data points. We apply these techniques to trace out the bifurcation diagram of traveling waves in the traffic system.

The remainder of this paper is organized as follows. In \S\ref{Equation-Free Modeling}, we present an overview of equation-free methodologies. Then, in \S\ref{Diffusion Maps}, we introduce the concept of diffusion maps and define diffusion map based operators to be used in equation-free modeling. We then apply these techniques to conduct bifurcation analysis in a traffic flow model in \S\ref{application}. Finally, in \S\ref{conclusions},  we summarize our conclusions and give an outlook of open problems. 


\section{Overview of equation-free modeling}
\label{Equation-Free Modeling}

The equation-free approach is appropriate when working with a dynamical system of large dimension $N\gg1$ that reflects a known microscopic evolution law with $N$ variables and an attracting, low-dimensional, transversely stable manifold of dimension $D$. The $N$-dimensional system is referred to as the \textit{microsystem}, and the $D$-dimensional manifold is the \textit{macrosystem}. We assume that the system exhibits a sufficiently prominent time-scale separation: more precisely, we assume that the dynamics on the $D$-dimensional attracting manifold is slow compared to the fast transverse attraction towards this $D$-dimensional slow manifold \cite{Brunovsky1, Jones, Kuehn}. Once a system is known to be slow-fast, the goal is to choose macro-level variables that parametrize the slow manifold as best as possible. The process of equation-free modeling uses two operators: lifting and restriction. 

\begin{figure}
\centering
\includegraphics{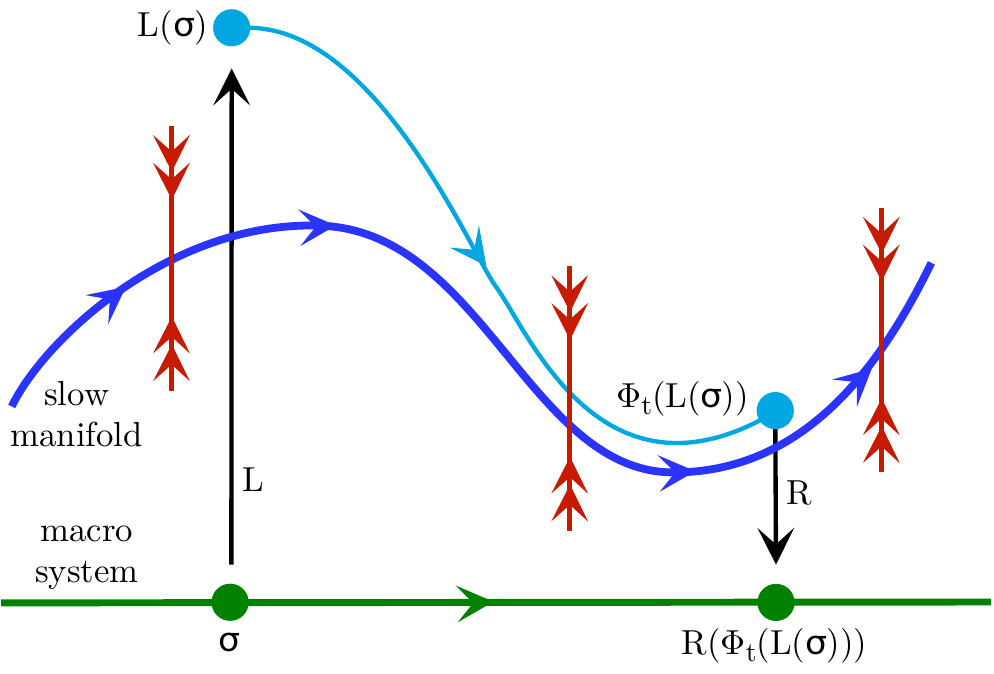}
\caption{Sketch showing the equation-free modeling approach applied to a slow-fast system. An initial macrostate $\sigma$ is lifted to a point that may possibly not lie on the slow manifold. Evolving this state for a short time will bring the profile close to the manifold. The evolved microstate can then be restricted back to the macro level or further evolved in time along the slow manifold.}
\label{manifold}
\end{figure}

The lifting operator $\mathcal{L}: \mathbb{R}^D \to \mathbb{R}^N$ maps a given \textit{macrostate} to a corresponding \textit{microstate}. Ideally, the lifting operator maps onto the slow manifold, but this is not easy to accomplish directly since the slow manifold may not be known. However, exploiting both time-scale separation and the assumption that the slow manifold is attracting, we only need to evolve the lifted microstate for a short time duration, using the time evolution $\Phi_t: \mathbb{R}^N \to \mathbb{R}^N$, to guarantee that the profile is close to the slow manifold, and can then use the resulting microstate as the image of the macrostate under lifting; see Figure~\ref{manifold} for an illustration. The additional short time evolution is often referred to as the \textit{healing step}. \revised{We note that the choice of the lifting operator may affect the required time duration of the healing step, and we refer to \cite[End of \S~II.A, and references therein]{siettos2003coarse} for a discussion of this issue.}

The restriction operator $\mathcal{R}: \mathbb{R}^N \to \mathbb{R}^D$ takes a \textit{microstate} and maps it to a low-dimensional \textit{macrostate}. Once the macrolevel parametrization is known, the restriction operator is usually much easier to define. In the traffic model, for instance, the restriction operator is defined to be the standard deviation of the headways of a microstate \cite{MarschlerSIAM}. For consistency, we require that $\mathcal{R} \circ \mathcal{L} = \mathbb{I}_{\mathbb{R}^D}$, where $\mathbb{I}_{\mathbb{R}^D}$ is the identity map for the macrolevel variables.

The equation-free framework has many benefits as an approach to study macrolevel behavior. Once the operators are defined, traditional numerical analyses of the macrosystem can be conducted without constantly simulating the microsystem \cite{Kevrekidis2009}. Since equilibria can exist in the macrostate without existing in the microstate, equation-free methods can even carry out macro-level bifurcation analyses that would be impossible with only the microsystem \cite{Kevrekidis2009}, and we will demonstrate this in \S\ref{application}.


\section{Lifting and restriction operators via diffusion maps}
\label{Diffusion Maps}

We first review diffusion maps \cite{Coifman2005, MarschlerPRE} and then use this approach to construct lifting and restriction operators from a given data set. The goal of diffusion maps is to embed a large data set in a high-dimensional space $\mathbb{R}^N$ into a low-dimensional space $\mathbb{R}^D$ with $N\gg D$ so that the local geometry of the data is preserved.


\subsection{Diffusion maps}
\label{diffusionMapMethod}

Given a high-dimensional data set $X = \{X_m \in \mathbb{R}^N \ \vert \ m = 1,\dots,M\}$ of $m$ observations, we first calculate the pairwise distances between the data points $X_m$. Although any metric can be used, we use the Euclidean norm to define $d_{ij} = \norm{X_i - X_j}$. Next, we define an affinity matrix $\mathcal{D}\in\mathbb{R}^{M\times M}$ such that a smaller distance corresponds to a high affinity and a larger distance corresponds to a small affinity. We use a Gaussian kernel to construct $\mathcal{D}$ from the pairwise distances via
\[
\mathcal{D}_{ij} = \exp\left(\frac{-d_{ij}^2}{\epsilon^2}\right) = \exp\left(\frac{-\norm{X_i - X_j}^2}{\epsilon^2}\right) 
\]
where the parameter $\epsilon$ should be chosen so that it reflects the spatial distance on which we want to resolve geometric features of the data set. Choosing $\epsilon$ too small treats all data points as singletons; picking an $\epsilon$ that is too large ignores the differences between data points: either way, we lose all geometric information. In our application in \S\ref{application}, we select
\[
\epsilon = 5\, \mbox{median} (d_{ij})_{i>j}
\]
to be five times the median of the pairwise distances $d_{ij}$, which yielded good results. Other strategies for selecting $\epsilon$ are discussed in \cite{MarschlerPRE}. We will discuss at the end of this section how we can measure the effectiveness of a given choice of $\epsilon$ for reducing the dimension quantitatively.

Next, we convert the affinity matrix $\mathcal{D}$ into a Markov transition matrix $\mathcal{M}\in\mathbb{R}^{M\times M}$ by normalizing each row via
\[
\mathcal{M}_{ij} := \frac{\mathcal{D}_{ij}}{\displaystyle \sum_{m=1}^M \mathcal{D}_{im}}.
\]
For each fixed $0<D<M$ and each choice of $D$ eigenvalues $\lambda_1,\dots,\lambda_D$ with associated eigenvectors $\psi_1,\dots,\psi_D\in\mathbb{R}^M$ of the matrix $\mathcal{M}$, we follow \cite{Erban, Laing} and map the data set $X$ into $\mathbb{R}^D$ via
\begin{equation}
X_m \longmapsto (\psi_{1,m},\dots,\psi_{D,m}) \in \mathbb{R}^D, \quad m=1,\ldots,M,
\label{diffmapembed}
\end{equation}
where $\psi_{\ell,m}$ denotes the $m^\mathrm{th}$ element of the eigenvector $\psi_\ell\in\mathbb{R}^M$ for $\ell=1,\ldots,D$.

The final step is to select the finite set of eigenvectors to represent the data set. A common choice is to choose the eigenvectors that belong to the $D$ largest eigenvalues of $\mathcal{M}$, where $D$ is chosen, for instance, to indicate a gap in the eigenvalues. This approach ignores the fact that not all eigenvectors add significantly new geometric information. Hence, we instead follow the algorithm proposed in \cite{Dsilva} to determine the dimension $D$ and the eigenvectors that provide an optimal embedding. The idea is to pick eigenvectors recursively and use local linear fits with the previously selected set of eigenvectors to see whether the new eigenvector adds sufficient information to be included. Assume that we selected the first $j-1$ eigenvectors and set $\Psi_{j-1,m}:=[\psi_{1,m},\dots,\psi_{j-1,m}]^T\in\mathbb{R}^{j-1}$ for $m=1,\dots,M$. Let $\psi_j$ be the eigenvector of $\mathcal{M}$ with the largest eigenvalue that we have not considered yet. We then compute the local fit parameters
\[
(\alpha_{j,m}, \beta_{j,m}) := \argmin_{\alpha\in\mathbb{R}, \beta\in\mathbb{R}^{j-1}}
\sum_{i \neq m} \exp\left(\frac{-\|\Psi_{j-1,m}-\Psi_{j-1,i}\|^2}{\epsilon^2}\right)
\left(\psi_{j,i} - \left(\alpha + \beta^T \Psi_{j-1,i}\right) \right)^2
\]
where ``local" refers to data points whose Gaussian distance is small, and the accompanying cross-validation error for the linear fit given by
\begin{equation}\label{rj}
r_j := \sqrt{\frac{\sum_{m=1}^M (\psi_{j,m} - (\alpha_{j,m}+\beta_{j,m}^T \Psi_{j-1,m}))^2}{\sum_{m=1}^M \psi_{j,m}^2}}.
\end{equation}
Note that small values of $r_j$ indicate that $\psi_j$ is locally well approximated by $\psi_1,\ldots,\psi_{j-1}$, so that including $\psi_j$ will not improve the embedding. We therefore include only eigenvectors with large $r_j$ values in our diffusion-map embedding and stop its recursive definition once the values of $r_j$ stay close to zero. A good choice of  $\epsilon$ will result in a steep transition of the sequence $r_j$ from values close to one to values close to zero.


\subsection{Lifting and restriction operators}
\label{liftrestrict}

In general, equation-free modeling requires the definition of lifting and restriction operators that depend on the specific system we want to solve. Previous approaches rely on in-depth understanding of the relationship between the microscopic and macroscopic states. Here, we provide an algorithm for the construction of lifting and restriction operators that depends only on a given set of data points or observations and on a given diffusion-map embedding.


\paragraph{Restriction.}

First, we describe how we construct the restriction operator based on a given data set and the accompanying diffusion-map embedding. Assume $X\in\mathbb{R}^{N\times M}$ is an existing data set, and $\mathcal{M}\in\mathbb{R}^{M\times M}$ denotes the associated Markov transition matrix with eigenvalues $\lambda_j$ and eigenvectors $\psi_j$ for $j=1,\ldots,M$. Assume also that we picked an embedding dimension $D$ and that we ordered the eigenvalues $\lambda_j$ and eigenvectors $\psi_j$ so that the restriction operator $\mathcal{R}$ evaluated on a point $X_m\in\mathbb{R}^N$ in the data set is defined by
\begin{equation}\label{restrict}
\mathcal{R}(X_m) := \left( \psi_{1,m},\ldots,\psi_{D,m} \right)\in\mathbb{R}^D, \qquad m=1,\ldots,M,
\end{equation}
where $\psi_{\ell,m}$ denotes the $m^\mathrm{th}$ component of $\psi_\ell$. We need to extend the definition of $\mathcal{R}$ so that $\mathcal{R}(X_\mathrm{new})$ is defined for each $X_\mathrm{new}\in\mathbb{R}^N$. One option is to add the new data point $X_\mathrm{new}$ to the existing data set and recalculate for the new $(M+1)$-dimensional data set, but this is cumbersome and very expensive. Instead, we follow \cite{Coifman2008} and use the Nystr\"om extension to extend $\mathcal{R}$ to new data points. This technique takes advantage of the fact that eigenvectors and eigenvalues are related by $\mathcal{M}\psi_\ell=\lambda_\ell\psi_\ell$ or, equivalently,
\begin{equation}
\psi_{\ell,m} = \frac{1}{\lambda_\ell} \sum_{j=1}^M \mathcal{M}_{m,j} \psi_{\ell,j}, \qquad \ell,m=1,\ldots,M.
\label{eigRelation}
\end{equation}
The embedding for a new data point $X_\mathrm{new}$ cannot be calculated directly from (\ref{eigRelation}), but we can modify this equation as follows to approximate the embedding. As in \S\ref{diffusionMapMethod}, we define the Gaussian kernel
\[
\mathcal{D}_{\mathrm{new},m} = \exp\left(\frac{-\norm{X_\mathrm{new} - X_m}^2}{\epsilon^2}\right)
\]
and use this expression to define
\[
\psi_{\ell,\mathrm{new}} := \frac{1}{\lambda_\ell} \sum_{m=1}^M 
\frac{\mathcal{D}_{\mathrm{new},m}}{\sum_{j=1}^M \mathcal{D}_{\mathrm{new},j}} \psi_{\ell,m}.
\]
Following \cite{Liu, MarschlerPRE, Sonday2009}, we then set
\begin{equation}\label{Rnew}
\mathcal{R}(X_\mathrm{new}) := \left(\psi_{1,\mathrm{new}}, \dots, \psi_{D,\mathrm{new}} \right),
\end{equation}
which extends the definition of the restriction operator $\mathcal{R}$ to include the new data point $X_\mathrm{new}$.


\paragraph{Lifting.}

Next, we focus on the lifting operator. Given a macrostate in $\phi\in\mathbb{R}^D$, we need to define a lifted microstate $X=\mathcal{L}(\phi)\in\mathbb{R}^N$ so that $\mathcal{R}(X)=\mathcal{R}(\mathcal{L}(\phi))\in\mathbb{R}^D$ is close to $\phi$. Our goal is to construct a lifting operator using only the given data set in $\mathbb{R}^N$ and the parametrization via diffusion maps, \revised{which we accomplish by solving an optimization problem to interpolate between the original data points. Earlier work in \cite{Erban,Laing,Sonday2009} approached this problem using simulated annealing, which is computationally more expensive. Other complementary approaches to the construction of lifting operators are discussed in \cite[\S3.2]{chiavazzo2014reduced}.}

To set up the algorithm, we choose an integer $K$ with $K\geq D+1$. Given the data points $X_m\in\mathbb{R}^N$ with $m=1,\ldots,M$, we define
\[
\phi_m := \mathcal{R}(X_m) = \left( \psi_{1,m},\ldots,\psi_{D,m} \right)\in\mathbb{R}^D, \qquad m=1,\ldots,M.
\]
Given a macrostate $\phi_\mathrm{target}\in\mathbb{R}^D$, we first find the $K$ macrostates $\phi_{m_k}$ with $k=1,\ldots,K$ that are closest to $\phi_\mathrm{target}$ in $\mathbb{R}^D$. We then define the lifted state to be
\[
\mathcal{L}(\phi_\mathrm{target}) := \sum_{k=1}^K a_k X_{m_k},
\]
where the coefficient vector $(a_1,\ldots,a_K)\in\mathbb{R}^K$ is determined as the solution to the optimization problem
\begin{equation}\label{argmin}
(a_1,\ldots,a_K) := \argmin_{(b_1,\dots,b_K)\in [0,1]^K}
\left\{ \norm{\phi_\mathrm{target} - \mathcal{R}\left(\sum_{k=1}^K b_k X_{m_k}\right)} \mbox{ subject to }
\sum_{k=1}^K b_k = 1 \right\}.
\end{equation}
Thus, we define the lifted microstate to be the element in the convex hull of the $K$ microstates whose restriction is closest to the specified targeted macrostate. Note that (\ref{argmin}) will always have a solution since $\mathcal{R}$ is continuous and the domain is compact. In general, (\ref{argmin}) may have multiple solutions. In practice, choosing larger values of $K$ ensures that zero is achieved as a minimum and evolving solutions forward will bring them close to the underlying attracting manifold: as discussed in the next section, we did not encounter any difficulties with potential discontinuities of the lifting operators during arclength continuation.

\begin{figure}
    \centering
    \includegraphics[width=0.5\textwidth]{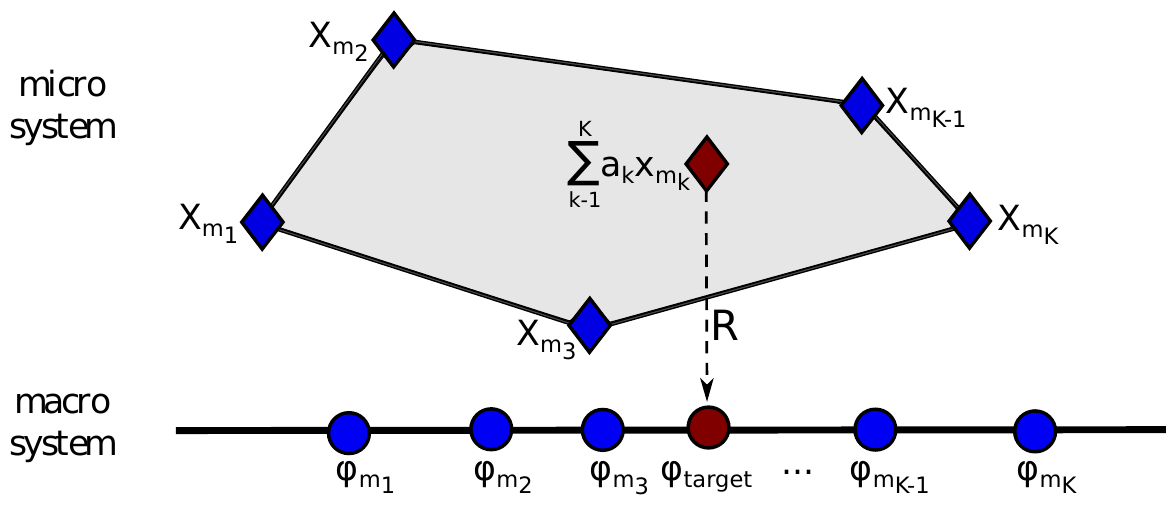}
    \caption{Shown is a visualization of the lifting operator. We solve for a linear combination of the $K$ microstates corresponding to the $K$ nearest macrostates such that the linear combination restricts to $\phi_{\text{target}}$.  }
    \label{fig:my_label}
\end{figure}


\section{Case study: Traffic model}
\label{application} 

In this section, we will use the traffic model introduced in \S\ref{introduction} to, first, demonstrate the accuracy and efficiency of the lifting and restriction operators we defined in \S\ref{liftrestrict} and, second, use these operators to compute bifurcation diagrams using equation-free modeling.

\subsection{Traffic model}
\label{trafficModel}

We write the traffic model introduced in \S\ref{introduction} as the first-order system
\begin{equation}
\label{trafficSys}
    \begin{aligned}
        \frac{\rmd x_n}{\rmd t} &= y_n, \\
        \frac{\rmd y_n}{\rmd t} &= \frac{1}{\tau}\left[ V(x_{n+1} - x_n) - y_n \right], \qquad n=1,\ldots,N
    \end{aligned}
\end{equation}
with $x_n\in\mathbb{R}/L\mathbb{Z}$, where the velocity function $V(d)$ is given by
\begin{equation}\label{e:ov}
V(d) = v_0(\tanh(d-h) + \tanh(h)).
\end{equation}
We note that (\ref{trafficSys}) is posed on the $2N$-dimensional space $(\mathbb{R}/L\mathbb{Z} \times \mathbb{R})^{2N}$, and we denote the solution of (\ref{trafficSys}) with initial condition $P=(x_n,y_n)_{n=1,\dots,N}$ evaluated at time $t$ by $\Phi_t(P)$. We record that (\ref{trafficSys}) respects the action of the discrete symmetry group $\mathbb{Z}_{N}$ given by
\begin{equation}\label{symmetry}
\left(\mathbb{R}/L\mathbb{Z} \times \mathbb{R}\right)^{2N} \longrightarrow
\left(\mathbb{R}/L\mathbb{Z} \times \mathbb{R}\right)^{2N}, \quad
(x_n, y_n)_{n=1,\ldots,N} \longmapsto (x_{(n+l)\mathrm{mod} N}, y_{(n+l)\mathrm{mod} N})_{n=1,\ldots,N}
\end{equation}
for $l\in\mathbb{Z}_N$, which corresponds to relabeling the cars consecutively. Throughout, we will fix the parameters as in Table~\ref{tab:param}, and focus on the emergence of free-flow and traffic-jam solutions as the parameter $v_0$, which appears in the velocity function, varies.

\begin{table}
    \centering
    \caption{Parameter descriptions and values}
    \label{tab:param}
    \begin{tabular}{llll}
        \hline
        Parameter & Description & Value & Used in \\
        \hline
        $N$ & number of cars & 30 & (\ref{trafficSys}) \\
        $L$ & length of the road & 60  & (\ref{trafficSys}) \\
        $\tau^{-1}$ & inertia of the car & 1.7  & (\ref{trafficSys}) \\
        $h$ & desired safety distance between cars & 2.4  & (\ref{e:ov}) \\ \hline
        $v_0$ & optimal velocity parameter & Uniform([0.96, 1.1])  & \S4.3 \\
        $A$ & amplitude of initial conditions in the dataset & Uniform([0, 4.5])  & \S4.3 \\
        $t_\mathrm{stop}$ & time evolved to create dataset & Exp(mean=$700$, shift=$200$)  & \S4.3 \\ \hline
        $K$ & number of points used for lifting & $3 \ (n=1)$  & \S4.4 \\
            & & $8 \  (n=2)$  & \S4.4 \\ \hline
        $t_\mathrm{skip}$ & healing evolution time & 300 & \S4.5 \\
        $\delta$ & time evolved for finite difference approximation & 240 & \S4.5  \\
        $s$ & continuation step size & 0.0025 ($m=5,000$)  & \S4.5 \\
            & & 0.01 ($m=1,000$) & \S4.5 \\
        $\nu$ & integer multiple of period sought & 7 & \S4.5 \\
        \hline
    \end{tabular}
\end{table}

The free-flow solution of the microsystem (\ref{trafficSys}) is defined by
\[
x_n(t) = (n-1)\frac{L}{N} + t V\left(\frac{L}{N}\right) \mod L, \qquad
y_n(t) = V\left(\frac{L}{N}\right), \qquad n=1,\ldots,N.
\]
It captures traffic flows where all cars keep the same distance $L/N$ from each other and travel with the velocity dictated by the constant headway. Traffic jams are captured by the traveling-wave ansatz
\[
(x_n(t), y_n(t)) = (x_*(n-ct), y_*(n-ct)), \qquad n=1,\ldots,N,
\]
where $(x_*(\xi), y_*(\xi))$ are $N$-periodic functions that describe the profile of the traveling wave, and $c$ is its speed. Substituting this ansatz into (\ref{trafficSys}) shows that the $N$-periodic profile $(x_*(\xi), y_*(\xi))$ and the wave speed $c$ need to satisfy the system
\begin{equation}
    \begin{aligned}
        -c \frac{\rmd x_*}{\rmd\xi}(\xi) &= y_*(\xi)\\
        -c \frac{\rmd y_*}{\rmd\xi}(\xi) &= \frac{1}{\tau} \left(V(x_*(\xi + 1) - x_*(\xi)) - y_*(\xi)\right)
    \end{aligned}
    \label{waveSoln}
\end{equation}
of delay differential equations. Note that the wave speed $c$ is a variable that needs to be solved for as part of (\ref{waveSoln}).


\subsection{Computing bifurcation diagrams using the microsystem}
\label{cont} 

\begin{figure}
\centering
\includegraphics[width=0.75\textwidth]{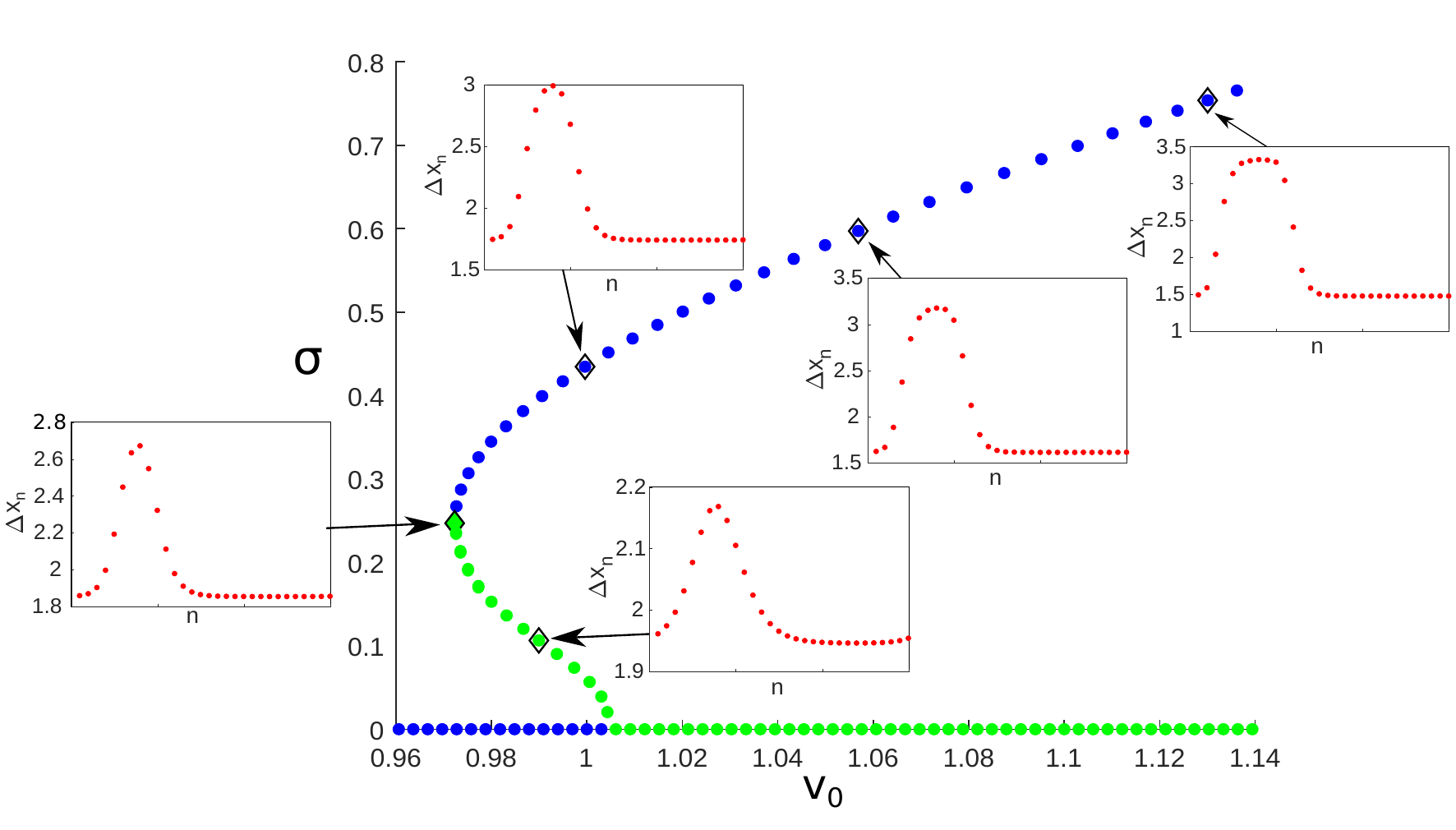}
\caption{Shown is the zero set of the function $\mathcal{F}^\mathrm{tw}$ obtained by pseudo-arclength continuation. The associated traveling-wave profiles are shown for selected points (marked with black diamonds) on the bifurcation branch. Note that the scale on the vertical axes of the insets varies to better illustrate the shape of the profiles. The algorithm detects a fold point at $(v_0, \sigma) \approx (0.97, 0.25)$, where stability changes: blue dots mark stable states, and green dots mark unstable states.}
\label{microBifProfiles}
\end{figure}

\begin{figure}
\centering
\includegraphics[width=0.5\textwidth]{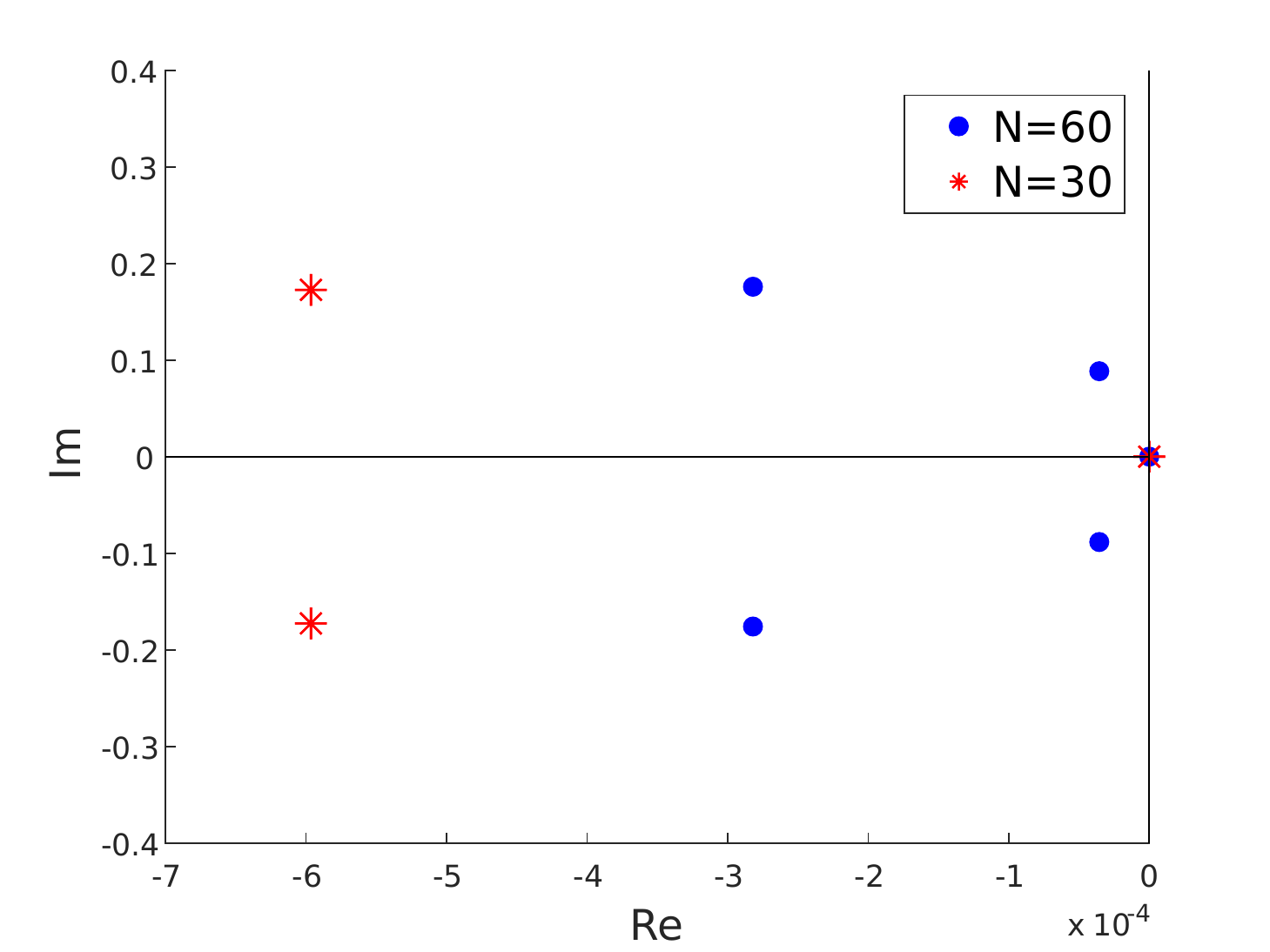}
\caption{Shown are the Floquet spectra of the linearization of the traffic system (\ref{trafficEq}) about a traveling-wave solution for the fold points for $N=30$ and $N=60$ ($v_0 = 0.97, 0.88$ and $h=2.4, 1.2$, respectively). Note that the nonzero Floquet exponents are much closer to zero for $N=60$ than for $N=30$, which indicates that the slow-fast time-scale separation becomes less pronounced as $N$ increases.}
\label{spectra}
\end{figure}

First, we use pseudo-arclength continuation to compute and continue traveling waves, and their wave speeds, as $N$-periodic solutions to (\ref{waveSoln}) as the parameter $v_0$ is varied. We will use the headways $u(\xi):=x(\xi+1)-x(\xi)$ instead of the positions $x(\xi)$ as variables. We will argue that $N$-periodic traveling waves correspond to regular roots of the function
\begin{eqnarray}\label{etw}
\mathcal{F}^\mathrm{tw}: &&
C^2(\mathbb{R}/N\mathbb{Z}) \times \mathbb{R}^2 \longrightarrow C^0(\mathbb{R}/N\mathbb{Z}) \times \mathbb{R}^2 \\ && \nonumber
(u,c,d) \longmapsto
\begin{pmatrix}
\xi \mapsto c^2 \tau \frac{\rmd^2u}{\rmd\xi^2}(\xi) - c \frac{\rmd u}{\rmd\xi}(\xi) - V(u(\xi+1)) + V(u(\xi)) + d \\
L - \sum_{n=0}^{N-1} u(n) \\
\int_0^N \left\langle \frac{\rmd u}{\rmd\xi}(\xi), u_*(\xi) - u(\xi) \right\rangle \rmd\xi
\end{pmatrix}
\end{eqnarray}
for each fixed \revised{value of the constant $v_0$ that appears in the function $V(u)$ defined in (\ref{e:ov})}. The first component $\mathcal{F}^\mathrm{tw}_1$ is the delay-differential equation (\ref{waveSoln}) written as a function of the headways; the additional term $d$ accounts for the fact that the first component (with $d=0$) has mass zero, so that $\int_0^N \mathcal{F}^\mathrm{tw}_1(u,c,d)(\xi)\rmd\xi=0$ for all $(u,c,d)$. The second component of $\mathcal{F}^\mathrm{tw}$ ensures that the headways add up to the length $L$ of the ring road. Finally, the last component is a phase condition that selects a unique profile amongst the family of spatial translates of a given solution $u_*(\xi)$; during continuation, $u_*(\xi)$ is normally taken to be the solution obtained at a previous continuation step. The following result gives conditions that guarantee that the set of roots of $\mathcal{F}^\mathrm{tw}$ consists of regular zeros.

\begin{theorem}\label{t1}
Fix $v_0$. If (i) $\mathcal{F}^\mathrm{tw}(u_*,c_*,0)=0$, (ii) the null space of $\rmD_u\mathcal{F}^\mathrm{tw}_1(u_*,c_*,0)$ is two-dimensional and spanned by $\frac{\rmd u_*}{\rmd\xi}, v\in C^2(\mathbb{R}/N\mathbb{Z})$ with $\sum_{j=0}^{N-1}v(n)\neq0$, and (iii) $\rmD_c\mathcal{F}^\mathrm{tw}_1(u_*,c_*,0)$ is not in the range of $\rmD_u\mathcal{F}^\mathrm{tw}_1(u_*,c_*,0)$, then $\rmD_{(u,c,d)}\mathcal{F}^\mathrm{tw}(u_*,c_*,0)$ has a bounded inverse.
\end{theorem}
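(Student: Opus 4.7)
The plan is to write the full derivative
\[
L := \rmD_{(u,c,d)}\mathcal{F}^\mathrm{tw}(u_*, c_*, 0) \colon C^2(\mathbb{R}/N\mathbb{Z}) \times \mathbb{R}^2 \longrightarrow C^0(\mathbb{R}/N\mathbb{Z}) \times \mathbb{R}^2
\]
as the bordered operator
\[
L(\tilde u, \tilde c, \tilde d) = \Bigl( L_1 \tilde u + g\,\tilde c + \tilde d,\; -\textstyle\sum_{n=0}^{N-1} \tilde u(n),\; -\int_0^N u_*'(\xi)\,\tilde u(\xi)\,\rmd\xi \Bigr),
\]
where $L_1 := \rmD_u \mathcal{F}^\mathrm{tw}_1(u_*,c_*,0)$ and $g := \rmD_c \mathcal{F}^\mathrm{tw}_1(u_*,c_*,0) = 2c_* \tau u_*'' - u_*'$. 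The overall strategy is standard: first show that $L$ is Fredholm of index zero, then establish injectivity, and finally use the fact that a bijective bounded linear map between Banach spaces has a bounded inverse.

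For the Fredholm property, I would argue that $L_1 \colon C^2(\mathbb{R}/N\mathbb{Z}) \to C^0(\mathbb{R}/N\mathbb{Z})$ is itself Fredholm of index zero: its leading part $c_*^2 \tau \partial_\xi^2$ is Fredholm of index zero on the circle, with kernel and cokernel both spanned by constants, and the first-order and shifted zero-order terms, including the delay term $-V'(u_*(\xi+1))\tilde u(\xi+1)$, are compact because they factor through the compact embedding $C^2 \hookrightarrow C^1$. The full operator $L$ then differs from the direct sum $L_1 \oplus I_{\mathbb{R}^2}$ by a finite-rank map, so $L$ is also Fredholm of index zero.

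The next step is to identify $\ker L_1^*$. Integration of $L_1 \tilde u$ against the constant $\psi_0\equiv 1$ vanishes (the derivative terms drop by periodicity, and the two $V'$-weighted terms cancel after the shift $\xi\mapsto\xi+1$ on $\mathbb{R}/N\mathbb{Z}$), so $\psi_0 \in \ker L_1^*$, and since $\dim\ker L_1^* = \dim\ker L_1 = 2$ by (ii) and the Fredholm alternative, there exists a complementary $\psi^*\in\ker L_1^*$. Because $\int_0^N g\,\rmd\xi=0$ as well, condition (iii) is equivalent to $\langle\psi^*,g\rangle\neq 0$. Now suppose $L(\tilde u,\tilde c,\tilde d)=0$: pairing the first component with $\psi_0$ yields $N\tilde d=0$, hence $\tilde d=0$, and pairing with $\psi^*$ then yields $\tilde c\langle\psi^*,g\rangle=0$, hence $\tilde c=0$ by (iii). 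The first equation reduces to $L_1\tilde u=0$, so $\tilde u=\alpha u_*'+\beta v$ for some $\alpha,\beta\in\mathbb{R}$, and the remaining two scalar components of $L(\tilde u,\tilde c,\tilde d)=0$ become the $2\times 2$ linear system
\[
\begin{pmatrix} \sum_n u_*'(n) & \sum_n v(n) \\ \|u_*'\|_{L^2}^2 & \int_0^N u_*'(\xi) v(\xi)\,\rmd\xi \end{pmatrix}\begin{pmatrix}\alpha\\ \beta\end{pmatrix}=\begin{pmatrix}0\\ 0\end{pmatrix}.
\]

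The main obstacle I anticipate is showing that this $2\times 2$ matrix is nonsingular, which is where the assumptions must combine with an additional structural observation about $u_*$. The key point is that $\sum_{n=0}^{N-1}u_*'(n)=0$: every spatial translate $u_*(\cdot+s)$ represents the same traveling wave on the ring of length $L$, so $\sum_n u_*(n+s)=L$ holds identically in $s$, and differentiation at $s=0$ gives the claim. The determinant then reduces to $-\|u_*'\|_{L^2}^2\,\sum_n v(n)$, which is nonzero by the sum assumption in (ii) together with $u_*'\not\equiv 0$ (a non-trivial traveling wave). Hence $\alpha=\beta=0$, so $L$ is injective; combined with Fredholm index zero, $L$ is bijective, and the bounded-inverse theorem delivers the conclusion. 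The delicate point is the vanishing $\sum_n u_*'(n)=0$, which is built in via the physical interpretation of $u_*$ as the headway profile of a traveling wave on the closed ring rather than being a formal consequence of (i)--(iii) alone; it would be worth verifying that this is the intended reading, or alternatively that the second component of $\mathcal{F}^\mathrm{tw}$ uses an integral in place of the discrete sum, which would make this cancellation automatic.
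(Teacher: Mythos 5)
Your argument is correct and follows the same route the paper sketches: Fredholm of index zero via the bordering lemma, plus triviality of the null space of the full linearization, exploiting the fact that the range of $\rmD_u\mathcal{F}^\mathrm{tw}_1$ has mass zero. The one point you flag as delicate --- whether $\sum_{n=0}^{N-1}u_*'(n)=0$ --- actually follows from hypothesis (i) alone (for $c_*\neq0$): summing the profile equation $c_*^2\tau u_*''-c_*u_*'-V(u_*(\cdot+1))+V(u_*(\cdot))=0$ over $\xi=n+s$ for $n=0,\dots,N-1$, the two $V$-terms cancel by $N$-periodicity, so $S(s):=\sum_{n}u_*(n+s)$ satisfies $c_*^2\tau S''-c_*S'=0$, and since $S$ is $1$-periodic this forces $S'\equiv0$. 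With that, your $2\times2$ determinant reduces to $-\|u_*'\|_{L^2}^2\sum_n v(n)\neq0$ exactly as you computed, and the proof closes.
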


\begin{proof}
We give only a brief outline of the proof. The key observations are that $\rmD_u\mathcal{F}^\mathrm{tw}_1$ is Fredholm of index zero and that elements in its range have mass zero. Since $\frac{\rmd u_*}{\rmd\xi}$ is contained in the null space of $\rmD_u\mathcal{F}^\mathrm{tw}_1$, it is not difficult to show that the null space is at least two-dimensional, and we assumed that its dimension is indeed two and that the null space is spanned by $\frac{\rmd u_*}{\rmd\xi}$ and $v$. Using this information and the remaining assumptions, it is now straightforward to prove that the null space of the full linearization $\rmD_{(u,c,d)}\mathcal{F}^\mathrm{tw}_1(u_*,c_*,0)$ is trivial, which proves the theorem since this operator is also Fredholm with index zero by the bordering lemma (see \cite[Lemma~2.3]{Beyn}). 
\end{proof}

Theorem~\ref{t1} indicates that we can use arclength continuation with a secant predictor and Newton's method as corrector to compute branches of traveling waves by solving $\mathcal{F}^\mathrm{tw}(u,c,d;v_0)=0$, where $\mathcal{F}^\mathrm{tw}$ depends on $v_0$ through the optimal velocity function $V(u)=V(u;v_0)$. We implemented this algorithm in Fourier space to take advantage of spectral convergence.

The result of the numerical continuation is visualized in Figure~\ref{microBifProfiles} using the standard deviation $\sigma$ of the headways $u_*(n):=x_*(n+1)-x_*(n)$. Figure~\ref{spectra} shows the Floquet spectra of the linearization of the microscale system about sample traveling waves for $N=30$ and $N=60$. In both cases, $\lambda=0$ is an eigenvalue of multiplicity two. We observe that the gap between the nonzero Floquet exponents and the eigenvalues at the origin is much smaller for $N=60$ than for $N=30$. In fact, the gap will shrink to zero as $N$ goes to infinity due to the presence of a conservation law in the continuum limit and, in particular, the slow-fast time-scale separation will become less prominent as $N$ increases. For this reason, we focus on the case $N=30$ in the remainder of this paper.


\subsection{Constructing embeddings using diffusion maps}
\label{s:embedding}

Our goal is to use diffusion maps to construct an embedding of the essential dynamics of the microsystem (\ref{trafficSys}) into a low-dimensional space and identify macroscopic variables that parametrize the reduced dynamics.

\paragraph{Construction of the data set.}

We construct the data set $X$ to which we apply the diffusion-map approach as follows. We generate $m=5000$ initial conditions of the form
\[
\label{trafficInit}
    x_n(0) = \frac{L(n-1)}{N} + A \sin\left(\frac{2\pi n}{N}\right), \quad
    y_n(0) = V\left(\frac{L}{N}\right), \qquad n=1,\ldots,N,
\]
and draw values for the coefficient $A\in[0, 4.5]$ and the parameter $v_0\in[0.96,1.1]$ randomly using the uniform distribution on these intervals. Each corresponding trajectory of (\ref{trafficSys}) is evolved using an ODE solver until a stopping time $t_\mathrm{stop}$ is reached that is drawn from a shifted exponential curve with mean 700 and shift 200. The collection of the $M$ end states of the headways in $\mathbb{R}^N$ defines the data set $X$.  We emphasize that we do not include information about the velocities in our data set. By varying the parameters and the time captured, our data set $X$ is comprised of varying wave shapes as well as some free-flow profiles. Figure~\ref{fig:data} shows that this sampling results in good coverage of the space encapsulating the bifurcation diagram. 

\begin{figure}
    \centering
    \includegraphics[width=0.48\textwidth]{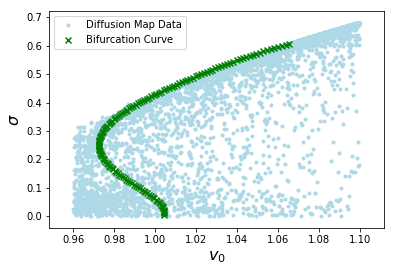}
    \includegraphics[width=0.48\textwidth]{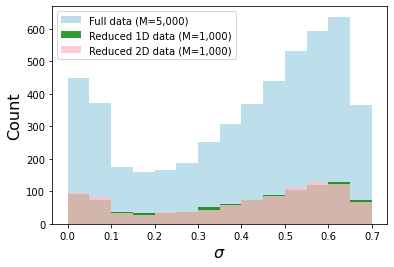}
    \caption{Shown are the data points generated for the diffusion map.}
    \label{fig:data}
\end{figure}

\paragraph{Reduction to one dimension: factoring out the discrete symmetry.}

\begin{figure}
    \centering
    \includegraphics[width=0.48\textwidth]{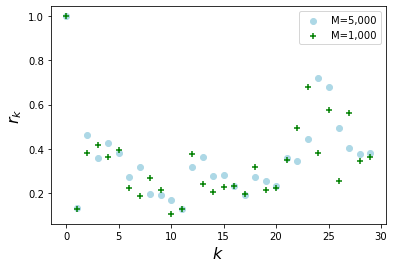}
    \includegraphics[width=0.48\textwidth]{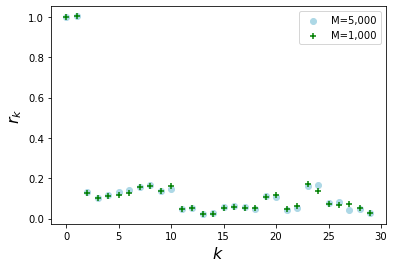}
    \caption{Shown are the local linear fit coefficients $r_k$ \revised{defined in (\ref{rj})} as functions of the index $k$ for the diffusion maps for $D=1$ (left panel) and $D=2$ (right panel).}
    \label{fig:r}
\end{figure}

First, we explicitly factor out the discrete symmetry (\ref{symmetry}) present in the model (\ref{trafficSys}). We accomplish this by shifting the indices in each profile in our data set using the symmetry (\ref{symmetry}) so that the maximum headway $\max(x_{n+1}-x_n)_{n=1,\ldots,N}$ inside the profile is achieved at $n=10$. If all solutions converge to, or at least resemble, traveling waves, factoring out the discrete symmetry effectively factors out the one-dimensional phase of all traveling waves and should therefore reduce the effective dimension of the embedding by one. Applying the diffusion-map approach outlined in \S\ref{diffusionMapMethod} to the set $X_\mathrm{align}$ of aligned elements in $\mathbb{R}^N$ \revised{and computing the linear fit coefficients defined in (\ref{rj})}, we indeed find that we can take $D=1$ as the embedding dimension so that the resulting restriction operator (\ref{restrict}) maps the aligned data set $X_\mathrm{align}$ into $\mathbb{R}$; see Figure~\ref{fig:r} (left panel). 

\revised{Figure~\ref{sigmaPlots} shows that the diffusion-map variable embedding data set into $\mathbb{R}$ is linearly related to the standard deviation of the headways. In particular, our diffusion-map approach automatically generates the parametrization introduced previously in \cite{MarschlerSIAM} based on \emph{a~priori} knowledge of the dynamics. In general, we might expect that the diffusion-map variables are mapped one-to-one to a coordinate system defined by physically relevant variables, and we refer to \cite[Inset in Figure~5]{Frewen}, \cite[Figures~6-7]{Kattis}, \cite[Figure~15]{Rajendran}, and \cite[\S~IV.A and Figure~5]{Sonday2009} for other examples where similar relationships were observed.}

\begin{figure}
\centering
\includegraphics[width=0.3\textwidth]{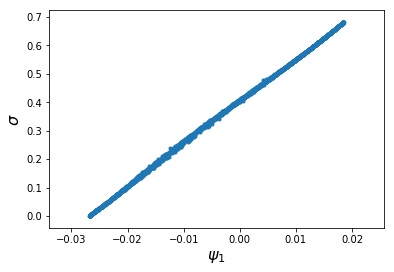}
\caption{Plotted are the values of the restriction operator $\mathcal{R}(X_m)=\psi_{1,m}$ against the standard deviation $\sigma$ evaluated at the data points $X_m$ for $m=1,\ldots,M$. Since there is a \revised{one-to-one} relationship between the diffusion map embedding and standard deviation of each data point, the two represent the same feature.}
\label{sigmaPlots}
\end{figure}

\begin{figure}
\centering
\includegraphics[width=\textwidth]{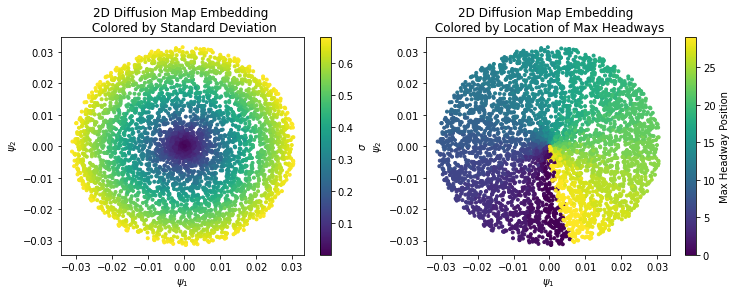}
\caption{Shown is the image of the data set $X$ under the diffusion-map embedding into $\mathbb{R}^2$ where the colors indicate the standard deviation (left panel) of the pre-images $X_m\in X$ and the wave peak position (right panel).}
\label{donut}
\end{figure}

\paragraph{Reduction to two dimensions.}
Next, we apply the diffusion-map approach from \S\ref{diffusionMapMethod} directly to the original data set $X$ without any alignment or other adjustments. In this case, we can take $D=2$ as the embedding dimension, and the resulting restriction operator (\ref{restrict}) therefore maps the data set $X$ into $\mathbb{R}^2$; \revised{see Figure~\ref{fig:r} (right panel)}. Figure~\ref{donut} shows that the planar embedding parametrizes the data set through polar coordinates where the radial direction corresponds to the standard deviation of headways and the angular direction captures the location of the peak of solutions along the circular ring road.

\paragraph{Reducing the number of data points to $M=1000$.}
To see if we can produce the same results with fewer data points, we also reduce the original diffusion map from $M=5000$ to $M=1000$. We use the $5000$ point diffusion map to inform the down-sampling of the data. For the one-dimensional case, we sort the embedding to be numerically ascending, uniformly sample the  $M=1000$ data points corresponding to those embeddings, and then recompute the diffusion map on those points. Since the two-dimensional diffusion map resembles a disc, we covert the embedding to polar coordinates,  uniformly sample $2000$ points radially, and then further uniformly sample $m=1000$ points with respect to the angular component. Finally, we recompute the diffusion map using the data points corresponding to the sampled embeddings.  Figure~\ref{fig:data} shows that this down-sampling approach preserves the distribution of $\sigma$ values in the dataset. We also observe the same relationship with $\sigma$ and the location of the maximum headways in the embeddings. 


\subsection{Lifting and restriction operators}

We now test the accuracy of the lifting and restriction operators that we defined in \S\ref{liftrestrict} based on the diffusion-map embedding constructed in \S\ref{s:embedding}.

\begin{figure}
\centering
\includegraphics[width=\textwidth]{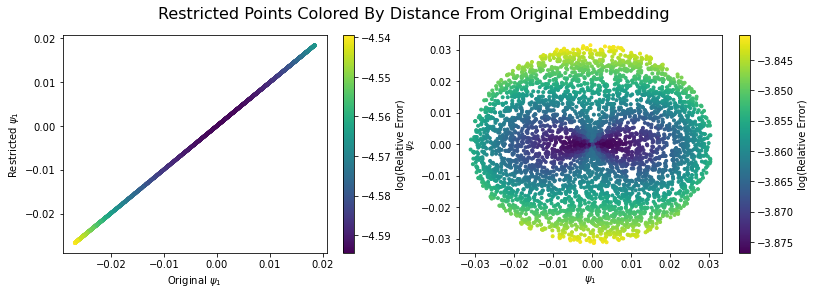}
\caption{To test the accuracy of the restriction operator defined via the Nystr\"om extension, we visualize the differences $\mathcal{R}(X_m)-\mathcal{R}_{X\setminus\{X_m\}}(X_m)$ (see main text for their definitions). Left panel: For $D=1$, we plot the one-dimensional coordinates of $\mathcal{R}(X_m)$ and $\mathcal{R}_{X\setminus\{X_m\}}(X_m)$ against each other as $m$ varies and indicate the logarithm of the relative error of their difference using colors. Right panel: For $D=2$, we plot the images $\mathcal{R}(X_m)$ as $m$ varies and visualize the logarithm of the relative error of their difference using colors.}
\label{restrictTests}
\end{figure}

First, we test the accuracy of the restriction operator defined via the Nystr\"om extension. For each fixed element $X_m\in\mathbb{R}^{N}$ of our data set $X$ (or $X_\mathrm{align}$), we first calculate the image $\mathcal{R}(X_m)$ of $X_m$ under the restriction operator (\ref{restrict}). Separately, we compute the restriction operator $\mathcal{R}_{X\setminus\{X_m\}}$ by applying the algorithm outlined in \S\ref{liftrestrict} to the data set $X\setminus\{X_m\}$, obtained from $X$ by removing the element $X_m$, and apply this restriction operator to the removed element $X_m$ to obtain $\mathcal{R}_{X\setminus\{X_m\}}(X_m)$ via the Nystr\"om extension (\ref{Rnew}). The norm of difference $\mathcal{R}(X_m)-\mathcal{R}_{X\setminus\{X_m\}}(X_m)$ measures the accuracy of our approach, and Figure~\ref{restrictTests} shows that the magnitude of the difference of these two images for both one- and two-dimensional embeddings is less than $10^{-5}$. For $D=1$, the average relative error is about $0.26\%$, while it is about $1.38\%$ for $D=2$. Plotting the errors as function of the reduced macrosystem, we see that the error is smallest in the center of domain; see Figure~\ref{restrictTests}. Since the Nystr\"om extension essentially takes a linear combination of the existing points, this observation is not unexpected. 

\begin{figure}
\centering
\includegraphics[width=\textwidth]{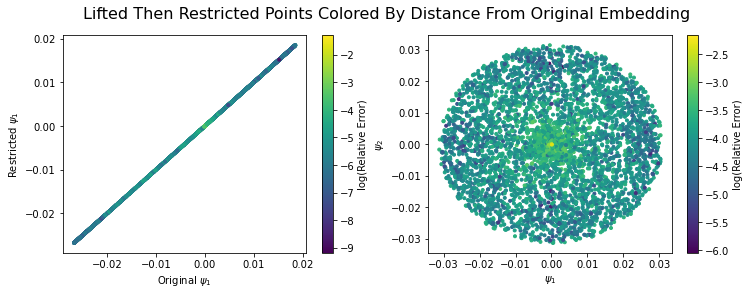}
\caption{Shown are the norms of the difference $\mathbb{I}_{\mathbb{R}^D}-\mathcal{R}\circ\mathcal{L}$ evaluated on the image $\mathcal{R}(X_\mathrm{align})$ for $D=1$ (left panel) and on $\mathcal{R}(X)$ for $D=2$ (right panel).}
\label{liftTests}
\end{figure}

Next, the theoretical approach to equation-free modeling assumes that $\mathcal{R}\circ\mathcal{L}=\mathbb{I}_{\mathbb{R}^D}$ is the identity in the macrovariables in $\mathbb{R}^D$. To test this property, we calculate the sup-norm of the map
\[
\mathbb{I}_{\mathbb{R}^D}-\mathcal{R}\circ\mathcal{L}: \quad
\mathcal{R}(Y)\subset\mathbb{R}^D \longrightarrow \mathbb{R}^D, \quad
\phi \longmapsto \phi - \mathcal{R}(\mathcal{L}(\phi))
\]
for $Y=X,X_\mathrm{align}$ and plot the results in Figure~\ref{liftTests} separately for the aligned and the original data sets. In the lifting operator, we set the number of interpolating points to $K=3$ for $D=1$ and $K=8$ for $D=2$. We observe very little difference in accuracy based on the value of $K$ but chose these values as they show the greatest accuracy. For $D=1$, the average relative error is less than $0.27\%$. For $D=2$, this error increases to $1.63\%$. We observe the least accuracy near the embeddings corresponding to low values of $\sigma$, likely due to the fact that these traffic jam solutions are unstable and thus have more variation in sampled profiles. 

\subsection{Computing bifurcation diagrams using the macrosystem}

In the preceding sections, we presented a data-driven approach to constructing lifting and restriction operators based on embeddings derived from diffusion maps and validated this approximation. In this section, we use an equation-free modeling approach based on these operators to compute the bifurcation diagram of traveling-wave solutions of (\ref{trafficSys}) in the reduced $n$-dimensional embedding space separately for $D=1$ and $D=2$. 

\begin{figure}
    \centering
    \includegraphics[width = 0.5\textwidth]{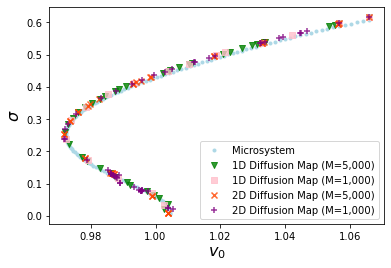}
    \caption{Shown is the bifurcation diagram in $\sigma$ coordinates computed for the microsystem directly, the one-dimensional, two-dimensional, full, and reduced diffusion maps. }
    \label{fig:sigmas}
\end{figure}

\paragraph{One-dimensional reduction: continuing fixed points.}

\begin{figure}
\centering
\includegraphics[width=0.48\textwidth]{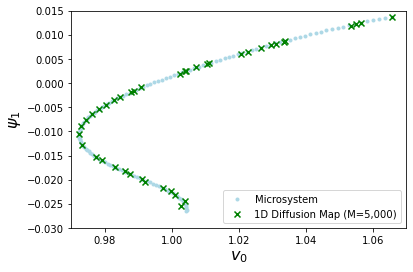}
\includegraphics[width=0.48\textwidth]{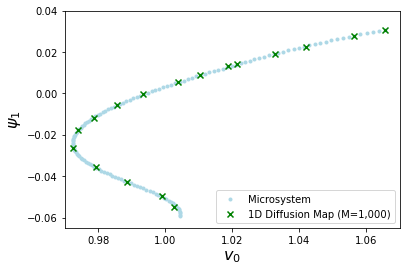}
\caption{Shown is a comparison of the bifurcation diagram computed using the equation-free approach for the reduced system with $D=1$ associated with the aligned data set and the diagram computed using pseudo-arclength continuation for the microsystem. Left: $M=5,000$ points, Right: $M=1,000$ points}
\label{bifurcation}
\end{figure}

First, we focus on the lifting and restriction operators constructed from the aligned data set $X_\mathrm{align}$, where we explicitly factored out the discrete symmetry. As shown above, the restriction operator maps into $\mathbb{R}$, and the parametrization of the macrosystem corresponds to the standard deviation of the headways. Since the phase is effectively factored out, we focus on computing and continuing equilibria of the macrosystem defined implicitly using an equation-free model.

We denote by $\Phi_t(P;v_0)$ the solution of the microsystem (\ref{trafficSys}) with parameter value $v_0$ that belongs to the initial condition $P=(x_n,y_n)_{n=1,\ldots,N}\in\mathbb{R}^N$. The equation-free macrosystem is then defined by the finite-difference quotient
\[
\frac{\rmd\phi}{\rmd t} = F(\phi,v_0) := 
\frac{\mathcal{R}\left(\Phi_{t_\mathrm{skip}+\delta}(\mathcal{L}(\phi);v_0)\right) - \mathcal{R}\left(\Phi_{t_\mathrm{skip}}(\mathcal{L}(\phi);v_0)\right)}{\delta},
\]
where $\phi\in\mathbb{R}$ denotes the macrovariable, and the time steps $t_\mathrm{skip},\delta>0$ are chose to obtain time-scale separation in the dynamics. We then compute and continue roots of the function $F(\phi,v_0)$ using pseudo-arclength continuation with a secant predictor of step size $s$ and a Newton corrector in the space $(\phi,v_0)\in\mathbb{R}^2$.

The results are shown in $\sigma$ coordinates in Figure~\ref{fig:sigmas} and the diffusion map coordinates Figure~\ref{bifurcation} for both the full diffusion map and the reduced diffusion map. Both bifurcation diagrams resemble the diagram computed previously in \cite{MarschlerSIAM} and the diagram in Figure~\ref{microBifProfiles} computed using continuation in the microsystem. We notice that the reduced diffusion map is more robust to continuation parameter choices, and we hypothesize that this difference results from the artificial alignment of the profiles in the one-dimensional diffusion map. In the $M=5,000$ diffusion map, aligning the data introduces noise around each value of $\sigma$ since there will be many profiles originating from different positions to potentially lift. Downsampling the diffusion map to $M=1,000$ reduces some of that noise, thus resulting in a more robust method.

\paragraph{Two-dimensional reduction: continuing periodic orbits.}

\begin{figure}
\centering
\includegraphics[width=\textwidth]{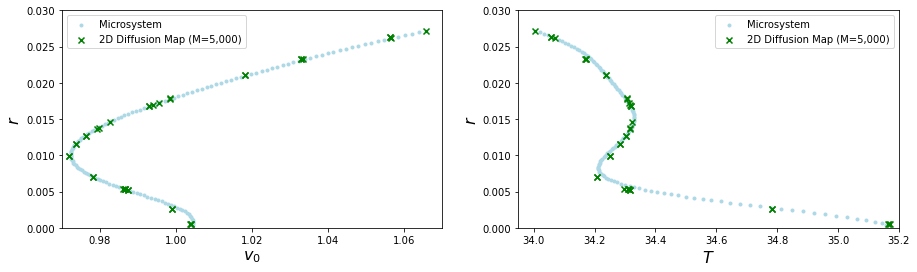}
\caption{Shown is a comparison of the bifurcation diagram of periodic orbits computed using the equation-free approach for the reduced system with $D=2$ associated with the data set $X$ and the diagram computed using pseudo-arclength continuation for the microsystem. The left and right panel show projections into $(v_0,r)$ and $(T, r)$, respectively.}
\label{2D}
\end{figure}

Next, we focus on the lifting and restriction operators constructed from the original data set $X$, which results in a two-dimensional macrosystem given by
\begin{equation}\label{efm}
\frac{\rmd\phi}{\rmd t} = F(\phi,v_0) := 
\frac{\mathcal{R}\left(\Phi_{t_\mathrm{skip}+\delta}(\mathcal{L}(\phi);v_0)\right) - \mathcal{R}\left(\Phi_{t_\mathrm{skip}}(\mathcal{L}(\phi);v_0)\right)}{\delta},
\end{equation}
where $\phi\in\mathbb{R}^2$ again denotes the, now two-dimensional, macrovariable. Traveling waves of the microsystem (\ref{trafficSys}) correspond to periodic orbits of the planar system (\ref{efm}). We denote by $\tilde{\Phi}_t(\phi;v_0)$ the solution operator of the macrosystem (\ref{efm}) and define the one-dimensional Poincare section $\mathbb{R}\phi_\mathrm{PS}$ for a nonzero vector $\phi_\mathrm{PS}\in\mathbb{R}^2$ in the macrosystem. Periodic orbits with period $T$ of (\ref{efm}) can then be found as solutions of the system $\mathcal{F}(r,T,v_0)=0$ given by
\[
\mathcal{F}:\quad
\mathbb{R}^3 \longrightarrow \mathbb{R}^2, \quad
(r, T, v_0) \longmapsto \tilde{\Phi}_{t_\mathrm{skip} }(r\phi_\mathrm{PS}; v_0) - \tilde{\Phi}_{t_\mathrm{skip} + \nu T}(r\phi_\mathrm{PS}; v_0).
\]
We solve this system for $(r,T,v_0)$ using again pseudo-arclength continuation with a secant predictor and a Newton corrector. Since the periods are roughly $ T \approx 34.5$, we choose to evolve for $\nu = 7$ periods to match $\delta = 240 \approx \nu T $ used in the one-dimensional system. 

The results are illustrated in $\sigma$ coordinates in Figure~\ref{fig:sigmas} and in diffusion map coordinates in Figure~\ref{2D}. The full diffusion map diagram is computed accurately in $(r, T, v_0)$-space and deviates from the diagram obtained from the microsystem only slightly near the fold point. The reduced diffusion map diagram shown in Figure~\ref{2D_reduced} is less accurate, particularly on the stable branch. In this case, downsampling the data likely left gaps in the $ (r, T, v_0) $ space of interest, leading to less accurate results.

\begin{figure}
\centering
\includegraphics[width=\textwidth]{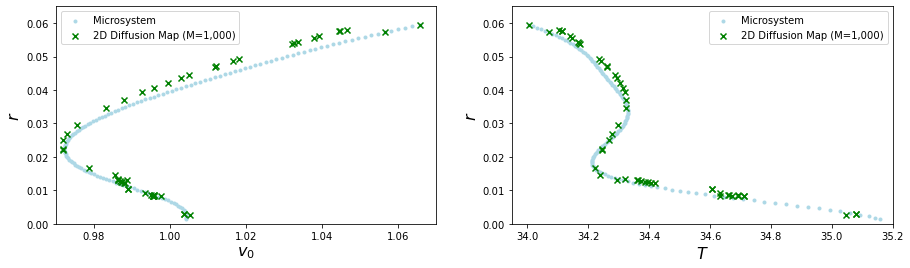}
\caption{Shown is a comparison of the bifurcation diagram of periodic orbits computed using the equation-free approach for the reduced system with $D=2$ associated with the reduced data set $X$ and the diagram computed using pseudo-arclength continuation for the microsystem. The left and right panel show projections into $(v_0,r)$ and $(T, r)$, respectively.}
\label{2D_reduced}
\end{figure}


\section{Conclusions}
\label{conclusions}

We considered large-dimensional dynamical systems, referred to as the microsystem, with a slow-fast time-scale separation. Equation-free modeling attempts to reduce the dynamics of the microsystem to an implicitly defined, lower-dimensional macrosystem that captures the dynamics on the slow manifold of the microsystem corresponding to the slow time scale. The macrosystem can then be used, for instance, to carry out direct numerical simulations with larger step sizes compared to simulations of the microsystem or to compute and continue stationary solutions or periodic orbits of the macrosystem that may correspond to more complex patterns in the microscopic variables. In previous applications, the macroscopic variables were identified based on insights into the microsystem: for instance, in the context of traffic-flow models, it makes sense to use the standard deviation from the free-flow solution to capture traffic-jam solutions.

For this paper, our goal was to develop an application-independent approach to equation-free modelling that does not rely on being able to make an explicit ansatz for the macroscopic variables. We focused on a data-driven approach and used diffusion maps to embed the data set into a lower-dimensional space, identify macroscopic variables that parametrize the low-dimensional space, and construct lifting an restriction operators that connect the micro- and macroscopic systems. Our case study demonstrated that these operators can be used to continue traffic-flow patterns as steady states or as periodic orbits in the macroscopic system.

It would be interesting to see whether this approach can be used to compute and continue more complex patterns that cannot be characterized directly in the microsystem: examples are patches of turbulent fluid surrounded by laminar flow, for instance, or other spatially chaotic structures whose overall shape that may be parametrized by appropriate macroscopic variables.

In our application, we found that the most important aspect was appropriately sampling data to build the diffusion map so that all regions of interest in phase space are well-covered.  Another avenue for future research is to find a better method for identifying the underlying data set, for instance by finding better sampling techniques for the initial data and the stopping times.

\paragraph{Acknowledgments.}
Tracy Chin, Jacob Ruth, and Rebecca Santorella were supported by the NSF grant DMS-1439786  through the Summer@ICERM program.
Rebecca Santorella was also supported by the NSF through grant 1644760.
Bjorn Sandstede was supported by the NSF under grants DMS-1408742, DMS-1714429, and CCF-1740741.


\bibliographystyle{plain}
\bibliography{EquationFreeModeling}

\end{document}